\theoremstyle{plain}
\newtheorem{thm}[subsection]{Theorem}
\newtheorem{exa}[subsection]{Example}
\theoremstyle{definition}
\newtheorem{rk}[subsection]{\textmd{Remark}}
\numberwithin{equation}{section} \setcounter{tocdepth}{1}
\begin{document}
\title [Solution of a Diophantine Equation ]{Solution of the Diophantine Equation  $  x_{1}x_{2}x_{3}\cdots x_{m-1}=z^n $}
\author{Zahid Raza,  Hafsa Masood Malik}
 \address{Department of Mathematics, \\ National University of Computer and Emerging Sciences, B-Block, Faisal Town, Lahore,
         Pakistan.}
\email{zahid.raza@nu.edu.pk, hafsa.masood.malik@gmail.com}


\keywords{Divisor; diophantine equation; diophantine system of equations; the greatest common divisor.}

\begin{abstract}
This work determine the entire family of positive integer solutions of the diophantine equation. The solution is described in terms of $\frac{(m-1)(m+n-2)}{2} $ or $\frac{(m-1)(m+n-1)}{2}$ positive parameters depending on  $n$ even or odd. We find the solution of a diophantine system of equations by using the solution of the diophantine equation. We generalized  all the results of the paper \cite{Kz}.
\end{abstract}
\maketitle
\section{Introduction and Preliminaries}
All the solutions of a diophantine equation of the form
\begin{equation*}
    ax-by=c,
\end{equation*}
has been found. But the theory on this equation in the literature can not apply on a diophantine equation of the form
\begin{equation}\label{e1}
      x_{1}x_{2}x_{3}\cdots x_{m-1}=z^n.
\end{equation}
So, we archived the solution of the equation \ref{e1}.
In \cite{Kz}, the author worked on the diophantine equation\ref{e1} for $ m=3$ with $n=2,3,4,5,6,$ and $m=4$ with $n=2;$ furthermore, he also worked on a diophantine system of $2-$equation of $5-$variables. We extent all these  results to the general case that is for all $m\geq 3$ and $n\geq 2$ and use it to find the solution of a system of $s$-diophantine equation in $t$ variables.

 The authors have not been able to find material on the equations of this paper. However, W. Sierpinski's book, Elementary Theory of Numbers listed two papers with material on equations of the form, $x_{1}x_{2}x_{3}\cdots x_{n}=t^k$. One is a 3-page paper published in 1955 (see reference[3]). And the other, a 10-page paper published in 1933 (see reference[4]). The authors have not been able to access these two papers. Even if some of the results in those two works; overlap with some of the results in this work, it is quite likely, that the methods used in this paper; will be different from those used in the above mentioned papers.
\begin{itemize}
\item An integer $ b $ is called divisible by an other integer $ a \neq 0,$ if there exist some integer $ c $ such that
\begin{equation*}
 b=ac.
\end{equation*}
 Symbolically,
 \begin{equation*}
  a \mid b.
\end{equation*}
\item Let $ a $ and $ b $ be given integer, with at least one of them different from zero. A positive integer $ d $ is called the greatest common divisor of the integers $ a ,b $ ??? \\ $\diamondsuit$ If $d\mid a$ and $ d\mid b.$\\$\diamondsuit$ Whenever there is $c$ such that $c\mid a$ and $c\mid b,$ then $c\leq d.$ \\Symbolically, $ d=\gcd(a,b)$
\item Two integer $ a,b $ are said to be relatively prime if $ \gcd(a,b)=1. $
\item Whenever $ \gcd(a,b)=d; $ then $ \gcd(\frac{a}{d},\frac{b}{d})=1. $
\end{itemize}
The first theorem, well-known and widely used in number theory, is known as Euclid's and for a proof
see reference \cite{W} or reference \cite{K}.
\begin{thm}\label{ti}
Assume that $\alpha, \beta, \gamma$   are positive integer such that $ \gcd(\alpha,\beta)=1,$ and $ \alpha\beta \mid \gamma,$  then
$ \alpha \mid \gamma.$
          \end{thm}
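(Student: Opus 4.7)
The plan is to reduce the claim to B\'ezout's identity. Taken literally, the statement is almost a tautology: from $\alpha\beta \mid \gamma$ one gets an integer $c$ with $\gamma = \alpha\beta c = \alpha(\beta c)$, so $\alpha \mid \gamma$ and the coprimality hypothesis $\gcd(\alpha,\beta) = 1$ plays no role whatsoever. Since the result is labelled as Euclid's lemma and will presumably be invoked in that spirit throughout the paper, I will sketch a proof of the classical form, namely that $\gcd(\alpha,\beta) = 1$ together with $\alpha \mid \beta\gamma$ forces $\alpha \mid \gamma$. This version also covers the stated form, because $\alpha\beta \mid \gamma$ trivially implies $\alpha \mid \beta\gamma$.

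First I would establish B\'ezout's identity in the form required here: if $\gcd(\alpha,\beta) = 1$ then there exist integers $u,v$ with $u\alpha + v\beta = 1$. The cleanest route is to consider the set $S = \{s\alpha + t\beta : s,t \in \mathbb{Z}\} \cap \mathbb{Z}_{>0}$, which is nonempty since it contains $\alpha$, and to extract its least element $d_0$ by the well-ordering principle. Writing $\alpha = q d_0 + r$ with $0 \le r < d_0$ and observing that $r$ is also a $\mathbb{Z}$-linear combination of $\alpha$ and $\beta$ forces $r = 0$ by minimality, so $d_0 \mid \alpha$; the same argument gives $d_0 \mid \beta$, and the defining properties of the greatest common divisor then pin down $d_0 = \gcd(\alpha,\beta) = 1$, delivering the linear combination.

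With $u,v$ in hand, multiplication of $u\alpha + v\beta = 1$ by $\gamma$ gives
\begin{equation*}
\gamma \; = \; u\alpha\gamma + v\beta\gamma.
\end{equation*}
Now $\alpha$ divides $u\alpha\gamma$ trivially and divides $v\beta\gamma$ by the hypothesis $\alpha \mid \beta\gamma$, so $\alpha$ divides the right-hand side, hence $\alpha \mid \gamma$. The main (and only) obstacle is setting up B\'ezout, which is standard; the passage from the linear combination to the conclusion is a single line, so I expect no genuine difficulty beyond routine care with the definition of divisibility.
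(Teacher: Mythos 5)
Your proof is correct, but note that the paper offers no proof of this statement at all: it explicitly defers to the references (Sierpinski and Rosen), so there is no in-paper argument to compare yours against. Your diagnosis of the statement is the most valuable part of the proposal and is exactly right: as literally written, the hypothesis $\alpha\beta \mid \gamma$ makes the conclusion $\alpha \mid \gamma$ immediate and the coprimality condition idle, so the theorem must be a misstatement of the classical Euclid lemma ($\gcd(\alpha,\beta)=1$ and $\alpha \mid \beta\gamma$ imply $\alpha \mid \gamma$). This is confirmed by how the result is actually invoked later: in the proof of the main theorem the authors pass from $d^2 \mid Z_0^n w^{n-2}$ and $\gcd(d^2, Z_0^n)=1$ to $d^2 \mid w^{n-2}$, which is precisely the classical form and not the form stated. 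Your Bézout argument --- extracting the least positive element of $\{s\alpha + t\beta\}$ by well-ordering, showing it divides both $\alpha$ and $\beta$, hence equals $1$, and then multiplying $u\alpha + v\beta = 1$ by $\gamma$ --- is the standard complete proof of that classical form and is sound; it is also essentially the proof one finds in the cited references. The only caveat is that a fully self-contained treatment would want the paper's statement corrected to match the version you proved and the version actually used.
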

The following theorem can be proved by using the fundamental theorem of arithmetic. It can also be
proved without the use of the fundamental theorem (for example \cite{W}).
\begin{thm}\label{tj}
 Consider that $ \alpha, \beta $  and $ n $ to be positive integers. If $ \alpha^n\mid \beta^n ,$ then
$ \alpha \mid \beta $
\end{thm}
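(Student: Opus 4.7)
The plan is to reduce to the coprime case by dividing out the greatest common divisor, then invoke Theorem \ref{ti} (Euclid's lemma) via a short induction on the exponent. Specifically, I would set $d = \gcd(\alpha,\beta)$ and use the last preliminary bullet to write $\alpha = da$ and $\beta = db$ with $\gcd(a,b) = 1$. The hypothesis $\alpha^n \mid \beta^n$ then reads $d^n a^n \mid d^n b^n$, and cancellation of the $d^n$ factor reduces the problem to showing that $a^n \mid b^n$ together with $\gcd(a,b) = 1$ forces $a = 1$; once this is done, $\alpha = d \mid db = \beta$ and we are finished.

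To carry out the reduced problem, the key auxiliary fact is that coprimality is preserved under taking powers: if $\gcd(a,b) = 1$ then $\gcd(a, b^k) = 1$ for all $k \geq 1$. I would prove this by induction on $k$; in the inductive step, any common divisor $c$ of $a$ and $b^{k+1} = b \cdot b^k$ is automatically coprime to $b$ (since $c \mid a$ and $\gcd(a,b) = 1$), so Theorem \ref{ti} gives $c \mid b^k$, placing $c$ inside $\gcd(a,b^k) = 1$. Interchanging the roles of $a$ and $b$ and applying the claim a second time yields $\gcd(a^n, b^n) = 1$. Combined with $a^n \mid b^n$, which makes $a^n$ a common divisor of $a^n$ and $b^n$, this forces $a^n \mid 1$, hence $a = 1$, as required.

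The main obstacle I foresee is the preservation-of-coprimality-under-powers claim: nothing in the preliminaries is stated at that level of generality, so it must be built by hand out of Theorem \ref{ti} and the defining properties of $\gcd$ listed in the bullet list. Everything else is routine bookkeeping, and the argument stays strictly within the elementary framework developed before the statement, avoiding the fundamental theorem of arithmetic in agreement with the parenthetical remark preceding the theorem.
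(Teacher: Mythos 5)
Your argument is correct, but there is nothing in the paper to compare it against: the paper does not prove Theorem \ref{tj} at all, remarking only that it can be shown with or without the fundamental theorem of arithmetic and deferring to \cite{W}. Your proof fills that gap in a way consistent with the paper's stated intent. The reduction $\alpha = da$, $\beta = db$ with $\gcd(a,b)=1$, the cancellation of $d^n$, the inductive proof that coprimality is preserved under powers, and the conclusion $a^n \le \gcd(a^n,b^n) = 1$ are all sound and use only the bullet-listed properties of $\gcd$ together with Euclid's lemma. One caveat worth flagging: Theorem \ref{ti} as literally printed says that $\gcd(\alpha,\beta)=1$ and $\alpha\beta \mid \gamma$ imply $\alpha \mid \gamma$, which is trivially true and useless; the form you invoke (and the form the paper itself silently uses later, e.g.\ to conclude $d^2 \mid w^{n-2}$ from $d^2 \mid Z_0^n w^{n-2}$) is the standard one, namely that $\gcd(\alpha,\beta)=1$ and $\alpha \mid \beta\gamma$ imply $\alpha \mid \gamma$. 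Your proof is fine provided \ref{ti} is read in that intended sense. A minor stylistic point: the paper's definition of $\gcd$ gives $c \le d$ for any common divisor $c$, not $c \mid d$, so the final step should be phrased as $a^n \le 1$ rather than $a^n \mid 1$; the conclusion $a = 1$ is unaffected.
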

In reference \cite{W}, the reader can find a proof of theorem \ref{t2} that makes use of theorem \ref{ti} and \ref{tj} ; but not the factorization theorem of a positive integer into prime
powers.

    \begin{thm}\label{t2}
     If $ \eta , \alpha, \beta, \gamma $ and $n$ are positive integer such that
      \begin{equation*}
          \alpha\beta = \eta\gamma^n  \ \ \ \ where  \ \ \gcd(\alpha, \beta )=1,
\end{equation*}
 then
 \begin{equation*}
 \alpha = \sigma{\alpha'_{1}}^n,  \ \ \beta=\varsigma {\beta'_{1}}^n, \ \ \ \eta=\varsigma\sigma  \ \ \
and \ \ \ \gamma=\alpha'_{1}\beta'_{1}
\end{equation*}
 where, $\gcd(\alpha'_{1}, \beta'_{1})=1=\gcd(\sigma, \varsigma)$.
 \end{thm}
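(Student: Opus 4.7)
The natural candidates are
\[
\alpha'_1 := \gcd(\alpha,\gamma), \qquad \beta'_1 := \gcd(\beta,\gamma),
\]
with $\sigma,\varsigma$ recovered as co-factors. First I would verify $\gcd(\alpha'_1,\beta'_1)=1$: any common divisor of $\alpha'_1$ and $\beta'_1$ also divides $\gcd(\alpha,\beta)=1$. A short application of Theorem~\ref{ti} then yields $\alpha'_1\beta'_1\mid\gamma$, so I can write $\gamma=\alpha'_1\beta'_1\,t$ for some positive integer $t$ which I eventually want to force to be $1$. Writing $\alpha=\alpha'_1\alpha''$ and $\beta=\beta'_1\beta''$, the last bulleted property of the greatest common divisor gives $\gcd(\alpha'',\beta'_1 t)=1=\gcd(\beta'',\alpha'_1 t)$; in particular each of $\gcd(\alpha'_1,\beta'')$, $\gcd(\alpha'',\beta'_1)$, $\gcd(\alpha'',t)$, $\gcd(\beta'',t)$ equals $1$.

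Substituting into $\alpha\beta=\eta\gamma^n$ and cancelling a factor of $\alpha'_1\beta'_1$ leaves
\[
\alpha''\beta''=\eta\,(\alpha'_1)^{n-1}(\beta'_1)^{n-1}\,t^n.
\]
A short induction on the exponent, built from Theorem~\ref{ti}, shows that $\gcd(a,b)=1$ implies $\gcd(a^k,b)=1$ for all $k\geq 1$. Applied with $(a,b)=(\alpha'_1,\beta'')$ this gives $\gcd((\alpha'_1)^{n-1},\beta'')=1$, so Theorem~\ref{ti} forces $(\alpha'_1)^{n-1}\mid\alpha''$; symmetrically $(\beta'_1)^{n-1}\mid\beta''$. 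Setting $\sigma:=\alpha''/(\alpha'_1)^{n-1}$ and $\varsigma:=\beta''/(\beta'_1)^{n-1}$ then produces $\alpha=\sigma(\alpha'_1)^n$ and $\beta=\varsigma(\beta'_1)^n$, and the identity above collapses to $\sigma\varsigma=\eta\,t^n$.

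The main obstacle is disposing of $t$. Because $\sigma\mid\alpha''$ and $\varsigma\mid\beta''$ while $\gcd(\alpha'',t)=\gcd(\beta'',t)=1$, I obtain $\gcd(\sigma\varsigma,t)=1$; combined with $t^n\mid\sigma\varsigma$ (read off the last displayed equation), this forces $t^n=1$, hence $t=1$. Consequently $\gamma=\alpha'_1\beta'_1$ and $\eta=\sigma\varsigma$. Finally, $\gcd(\sigma,\varsigma)$ divides $\gcd(\alpha,\beta)=1$, which delivers the remaining coprimality and completes the proof.
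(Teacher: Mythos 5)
Your proof is correct. Note first that the paper never proves Theorem~\ref{t2} at all: it defers to Sierpinski's book, remarking only that a proof exists which uses Theorems~\ref{ti} and~\ref{tj} but not the factorization of integers into prime powers. Your argument is a complete, self-contained proof in exactly that advertised spirit, so there is nothing in the paper to clash with. Concretely: taking $\alpha'_1=\gcd(\alpha,\gamma)$ and $\beta'_1=\gcd(\beta,\gamma)$ is the natural construction; the coprimality $\gcd(\alpha'_1,\beta'_1)=1$ and the divisibility $\alpha'_1\beta'_1\mid\gamma$ are immediate; and your mechanism for killing the cofactor $t$ (showing $\gcd(\sigma\varsigma,t)=1$ while $t^n\mid\sigma\varsigma$, hence $t=1$) is sound, since $\gcd(\alpha'',t)=\gcd(\beta'',t)=1$ does follow from the reduced-fraction property of the gcd applied to $\gcd(\alpha,\gamma)$ and $\gcd(\beta,\gamma)$. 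Every step rests only on gcd axioms and on Theorem~\ref{ti}; you do not even need Theorem~\ref{tj}. One caveat you should make explicit: Theorem~\ref{ti} as literally printed ($\gcd(\alpha,\beta)=1$ and $\alpha\beta\mid\gamma$ imply $\alpha\mid\gamma$) is vacuous, since $\alpha\mid\alpha\beta\mid\gamma$ always; like the rest of the paper, you are silently using the intended Euclid--Gauss form ($\alpha\mid\beta\gamma$ and $\gcd(\alpha,\beta)=1$ imply $\alpha\mid\gamma$), and your auxiliary facts --- that coprimality passes to powers and products --- all derive from that corrected version. With that reading fixed, the proof goes through and is arguably more useful to the reader than the paper's bare citation.
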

    \begin{thm}\label{t3}
     Let $k$ be a positive integer, and assume the 3-variables diophantine equation
      \begin{equation*}
          xy = kz^n.
\end{equation*}
Then all positive integer solutions can be described by 2-parametric formulas;\\
$ x=k_{1} {t_{1}}^n , \ \ y= k_{2}{t_{2}}^n, \ \ z= t_{1} t_{2} $ where, $\gcd(k_{1},k_{2})=1=\gcd(t_{1},t_{2})$.
 \end{thm}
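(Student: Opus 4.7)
The plan is to deduce Theorem~\ref{t3} as a direct corollary of Theorem~\ref{t2}, after identifying the variables of the two statements on the nose.

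The sufficiency direction is a one-line verification: given any quadruple $(k_{1}, k_{2}, t_{1}, t_{2})$ of positive integers with $\gcd(k_{1}, k_{2}) = 1 = \gcd(t_{1}, t_{2})$, set $x = k_{1} t_{1}^{n}$, $y = k_{2} t_{2}^{n}$, $z = t_{1} t_{2}$, and compute
\begin{equation*}
xy \;=\; k_{1} k_{2}\,(t_{1} t_{2})^{n} \;=\; k z^{n},
\end{equation*}
reading $k = k_{1} k_{2}$; no coprimality is actually used in this half.

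For the necessity direction, let $(x, y, z)$ be any positive integer solution of $xy = k z^{n}$, normalised so that $\gcd(x, y) = 1$. I would then invoke Theorem~\ref{t2} via the dictionary $\alpha \leftrightarrow x$, $\beta \leftrightarrow y$, $\gamma \leftrightarrow z$, $\eta \leftrightarrow k$; the hypothesis $\gcd(\alpha, \beta) = 1$ translates to the assumed $\gcd(x, y) = 1$, so the theorem supplies positive integers $\sigma, \varsigma, \alpha'_{1}, \beta'_{1}$ with $\gcd(\sigma, \varsigma) = \gcd(\alpha'_{1}, \beta'_{1}) = 1$ satisfying
\begin{equation*}
x = \sigma {\alpha'_{1}}^{n}, \qquad y = \varsigma {\beta'_{1}}^{n}, \qquad k = \sigma \varsigma, \qquad z = \alpha'_{1} \beta'_{1}.
\end{equation*}
Relabelling $k_{1} := \sigma$, $k_{2} := \varsigma$, $t_{1} := \alpha'_{1}$, $t_{2} := \beta'_{1}$ gives exactly the parametric formula asserted in Theorem~\ref{t3}, and the coprimality conditions carry over unchanged.

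The only genuinely nontrivial point — and thus the main obstacle — is the coprimality normalisation. Theorem~\ref{t2} refuses to run unless $\gcd(x, y) = 1$; any argument that does \emph{not} assume this at the outset has to first peel off $d = \gcd(x, y)$ from the pair $(x, y)$ and absorb the resulting $d^{2}$ into the product $k z^{n}$ in a way compatible with the $n$-th power structure on the right. Once that bookkeeping is done and one has verified that the induced parameters genuinely satisfy $\gcd(k_{1}, k_{2}) = 1$ and $k_{1} k_{2} = k$, the rest of the argument is just a transcription of Theorem~\ref{t2}.
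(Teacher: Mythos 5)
A preliminary remark: the paper states Theorem~\ref{t3} without any proof (it sits among the preliminaries, immediately after Theorem~\ref{t2}), so there is no in-paper argument to measure yours against. Your sufficiency check is fine, and your necessity argument in the normalised case is exactly right: when $\gcd(x,y)=1$, Theorem~\ref{t2} applied with $\alpha=x$, $\beta=y$, $\eta=k$, $\gamma=z$ gives precisely the claimed parametrization, with $k_1k_2=k$ forced by comparing $xy=k_1k_2z^n$ with $xy=kz^n$. That is surely the intended content of the statement.

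The gap is the step you set aside as ``bookkeeping'': removing the normalisation $\gcd(x,y)=1$. This cannot be done, because the theorem as literally stated is false for solutions with $\gcd(x,y)>1$. Take $n=2$, $k=1$ and $(x,y,z)=(2,8,4)$, which satisfies $xy=kz^n$. Any representation $x=k_1t_1^2$ forces $t_1=1$ (since $t_1^2\mid 2$), and $y=k_2t_2^2$ forces $t_2\in\{1,2\}$, hence $z=t_1t_2\le 2\neq 4$; so no choice of $k_1,k_2,t_1,t_2$ --- with or without the coprimality conditions --- realises this triple. Your proof therefore establishes the theorem only under the extra hypothesis $\gcd(x,y)=1$, and the deferred reduction to that case is not a routine verification but an impossibility. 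The honest conclusion is that Theorem~\ref{t3} must carry the hypothesis $\gcd(x,y)=1$ (making it a direct restatement of Theorem~\ref{t2}, exactly as you transcribe it), and that a genuine parametrization of all solutions of $xy=kz^n$ needs at least one further parameter recording $\gcd(x,y)$ --- compare $x=da^2$, $y=db^2$, $z=dab$ for $xy=z^2$ --- which is in effect what the machinery of Theorems~\ref{t4} and \ref{t5} is built to supply in the body of the paper.
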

    \section{Main Result}
\begin{thm}\label{t4}
 The diophantine equation in $m$-variables
\begin{equation*}
x_{1}x_{2}x_{3}\cdots x_{m-1}=z^n
\end{equation*}
 is equivalent to the $ m+3 $-variable system of equations
 \begin{equation}\label{e2}
    X_{m-1}X_{m-2}x_{1}x_{2}\cdots x_{m-3}=v{Z_0}^n,
             \end{equation}
 \begin{equation}\label{e3}
    w^{n-2}=vd^2
\end{equation}
where $d,X_{m-1}X_{m-2}, Z_0, w, v $ are positive integer variable such that\\
  $x_{m-1}=\theta X_{m-1},\ \ \
  x_{m-2}=\theta X_{m-2},\ \ \
  z=wZ_0 ,\ \ \
  \theta = w d ,\\
  \theta = \gcd(x_{m-1},x_{m-2}),\ \
  w=\gcd(z, \theta),
 \gcd(X_{m-1},X_{m-2})=1=\gcd(Z_0,d).$
      \end{thm}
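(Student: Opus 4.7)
The plan is to show the two directions: starting from $x_1 x_2 \cdots x_{m-1} = z^n$ and producing the system (\ref{e2})--(\ref{e3}) with the claimed substitutions, and conversely reassembling the original equation from the system.

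For the forward direction, I would introduce the substitutions in the order suggested by the statement. First, set $\theta = \gcd(x_{m-1}, x_{m-2})$ and write $x_{m-1} = \theta X_{m-1}$, $x_{m-2} = \theta X_{m-2}$ with $\gcd(X_{m-1}, X_{m-2}) = 1$. Plugging into the original equation gives
\begin{equation*}
\theta^2\, X_{m-1} X_{m-2}\, x_1 x_2 \cdots x_{m-3} = z^n.
\end{equation*}
Next, set $w = \gcd(z, \theta)$ and write $z = w Z_0$, $\theta = w d$ with $\gcd(Z_0, d) = 1$. Substituting and dividing by $w^2$ (legal since $n \geq 2$) produces
\begin{equation*}
d^2\, X_{m-1} X_{m-2}\, x_1 \cdots x_{m-3} = w^{n-2}\, Z_0^n.
\end{equation*}

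The key step is now a divisibility argument. Since $\gcd(Z_0, d) = 1$ we have $\gcd(d^2, Z_0^n) = 1$, so Theorem \ref{ti} applied to the displayed identity forces $d^2 \mid w^{n-2}$. Define $v := w^{n-2}/d^2$, a positive integer; this is exactly equation (\ref{e3}). Cancelling $d^2$ from the displayed equation yields $X_{m-1} X_{m-2}\, x_1 \cdots x_{m-3} = v Z_0^n$, which is (\ref{e2}). All the gcd side-conditions listed in the theorem are built into the substitutions by construction.

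For the converse, assume (\ref{e2}) and (\ref{e3}) hold together with the stated identities. Multiplying (\ref{e2}) by $\theta^2 = w^2 d^2$ and using (\ref{e3}) gives
\begin{equation*}
\theta^2 X_{m-1} X_{m-2}\, x_1 \cdots x_{m-3} = w^2 d^2 v Z_0^n = w^2 \cdot w^{n-2}\cdot Z_0^n = (w Z_0)^n = z^n,
\end{equation*}
and since $\theta^2 X_{m-1} X_{m-2} = x_{m-1} x_{m-2}$, the original equation is recovered. The main obstacle in the whole argument is the Euclid-type step: one must be certain that the coprimality $\gcd(d, Z_0) = 1$, inherited from the definitions of $w$, $d$, and $Z_0$, is genuinely available before invoking Theorem \ref{ti} to deduce $d^2 \mid w^{n-2}$. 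Once that hurdle is cleared, defining $v$ as the quotient makes (\ref{e3}) automatic and (\ref{e2}) follows by cancellation, and the reverse direction is a straightforward multiplication.
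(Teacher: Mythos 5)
Your proof follows essentially the same route as the paper's: the same two gcd extractions (first $\theta=\gcd(x_{m-1},x_{m-2})$, then $w=\gcd(z,\theta)$), division by $w^{2}$, and the Euclid-lemma step using $\gcd(d^{2},Z_0^{n})=1$ to conclude $d^{2}\mid w^{n-2}$ and define $v=w^{n-2}/d^{2}$. The only difference is that you spell out the converse direction explicitly by multiplying back by $\theta^{2}=w^{2}d^{2}$, which the paper leaves implicit; otherwise the arguments coincide.
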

\begin{proof}
Consider the equation \ref{e1} and let
\begin{equation*}
    \theta=\gcd(x_{m-1},x_{m-2})
\end{equation*}
 such that
\begin{equation}\label{e4}
\Big\{x_{m-1} = \theta X_{m-1} ; x_{m-2}=\theta X_{m-2}, \gcd(X_{m-1},X_{m-2}) = 1 \Big\},
\end{equation}
then from equation \ref{e1} and \ref{e4} we obtain,
\begin{equation}\label{e5}
 \theta^2X_{m-1}X_{m-2}x_{1}x_{2}\cdots x_{m-3}=z^n.
\end{equation}
Assume that $ w = \gcd(z, \theta),$ then
\begin{equation}\label{e6}
\Big\{z=wZ_0 ; \  \theta = wd, \ \gcd(Z_0,d ) = 1\Big\}
\end{equation}
and from equation \ref{e6} and \ref{e5} we further have,
 \begin{equation*}
  w^2d^2 X_{m-1}X_{m-2}x_{1}x_{2}\cdots x_{m-1}=Z_0^nw^n, d^2 X_{m-1}X_{m-2}x_{1}\cdots x_{m-3}=Z_0^nw^{n-2}.
\end{equation*}
Thus
\begin{equation}\label{e7}
 d^2 X_{m-1}X_{m-2}x_{3}\cdots x_{m-3}=Z_0^nw^{n-2}, \ \ \ n\geq 2
\end{equation}
since $\gcd(d,Z_0)=1,$ it follows that $\gcd(d^2,Z_0^n)=1$ which together with equation  \ref{e5} and theorem \ref{ti}; implies that $d^2$ must be a divisor of $w^{n-2}$ that is
\begin{equation}\label{e8}
    w^{n-2}=vd^2, \ \ \  \mbox{for some } \ \ v\in \mathbb{Z}
\end{equation}
 from equations \ref{e8} and \ref{e7}, we have the result.
\end{proof}
\begin{thm}\label{t5}
All the positive solution of the equation \ref{e3} can be describe by the parameters as: 
 \begin{description}
\item[i] If $n$ is odd, then\\
$w =\prod\limits_{i=0}^{\frac{n-3}{2}}r_{2i+1}g^2,$\ \
$d =\prod\limits_{i=0}^{\frac{n-3}{2}}(r_{2i+1})^{i}  g^{n-2},$ \ \
$v = \prod\limits_{i=0}^{\frac{n-3}{2}}(r_{2i+1})^{n-2i-2}.$

  \item[ii] If n is even, then\\
  $w = \prod\limits_{i=0}^{\frac{n-4}{2}}{r}_{2i+1}h,$\ \
  $d = \prod\limits_{i=0}^{\frac{n-4}{2}}({r}_{2i+1})^{i} h^{\frac{n-2}{2}},$ \ \
   $v=\prod\limits_{i=0}^{\frac{n-4}{2}}({r}_{2i+1})^{n-2i-2}.$
   \end{description}
   \end{thm}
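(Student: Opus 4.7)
My plan is to verify both directions of the asserted parametrization. For the easy direction --- that every admissible choice of parameters yields a positive integer solution of $w^{n-2} = vd^2$ --- I would do a single exponent count: the total exponent of each $r_{2i+1}$ on the $vd^2$ side is $(n-2-2i) + 2i = n-2$, matching its exponent in $w^{n-2}$, and the same check for $g$ (exponent $2(n-2)$ on each side) or $h$ (exponent $n-2$) finishes both parities at once.

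For completeness I would induct on $n$ with step $n \mapsto n-2$, so that parity is preserved. The base cases are $n=3$ (where $w = vd^2$, take $g=d$ and $r_1=v$) and $n=4$ (where $w^2 = vd^2$ gives $d \mid w$ via Theorem \ref{tj}, so $h=d$ and $r_1 = w/d$ work). For the inductive step, given $(w, d, v)$ with $w^{n-2} = vd^2$, set $G = \gcd(w, d)$ and write $w = GW$, $d = GD$ with $\gcd(W, D) = 1$. Substituting and cancelling $G^2$ yields $G^{n-4}W^{n-2} = vD^2$; since $\gcd(W^{n-2}, D^2) = 1$, Theorem \ref{ti} forces $D^2 \mid G^{n-4}$, so one writes $G^{n-4} = SD^2$ for some positive integer $S$, whence $v = SW^{n-2}$. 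The triple $(G, D, S)$ satisfies the same equation with $n$ replaced by $n-2$, of the same parity, so the inductive hypothesis provides parameters $g$ (or $h$) and $R_1, R_3, \ldots$ describing it.

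To close the induction I would relabel by setting $r_1 := W$, $r_{2j+1} := R_{2j-1}$ for $j \geq 1$, and carrying $g$ (or $h$) unchanged, then rewriting $w = GW$, $d = GD$, $v = SW^{n-2}$ in the new variables. The main obstacle will be the exponent bookkeeping in this relabelling: multiplying $G$ (which by induction contains one further copy of each $R_{2i+1}$) into $D$ raises the exponent of $R_{2i+1}$ in $d$ from $i$ to $i+1$, matching the theorem's exponent $j$ on $r_{2j+1}$ after the shift $j = i+1$; the exponent on $v$ shifts from $(n-4)-2i$ to $n-2j-2$ under the same substitution; and the freshly introduced $r_1 = W$ picks up exponents $1$, $0$, $n-2$ in $w$, $d$, $v$ respectively, exactly as prescribed. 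Once these exponent identities are verified for both parities, the parity-preserving induction terminates at the base cases and establishes the full theorem.
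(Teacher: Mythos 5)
Your proposal is correct and takes essentially the same route as the paper: your inductive step (extract $G=\gcd(w,d)$, cancel $G^{2}$, invoke Theorem \ref{ti} to get $D^{2}\mid G^{n-4}$, and recognize $G^{n-4}=SD^{2}$ as the same equation with $n$ replaced by $n-2$) is precisely the paper's iteration $D_{i}=\gcd(D_{i-1},r_{2i-2})$, repackaged as a parity-preserving descent. If anything your write-up is tighter than the paper's, which relies on ``continued in this way'' plus an unverified backward substitution and omits the sufficiency direction that you dispatch with the exponent count.
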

\begin{proof}$(i):$ If $n$ is odd and let $ D_{0} = \gcd(w,d),$ then
 \begin{equation}\label{e9}
   w = D_{0}.r_{1}\ \textmd{and} \ d = D_{0}.r_{0}\ \
\textmd{such that}\ \ \gcd(r_{1} , r_{0} ) = 1.
\end{equation}
So from \ref{e2} and \ref{e3}, we have
        $ D_{0}^{n-2}r_{1}^{n-2} = vD^2r_{0}^2$
        \begin{equation}\label{e10}
         D_{0}^{n-4}{r_{1}}^{n-2} = vr_{0}^2
\end{equation}
Since $\gcd(r_{1} , r_{0}) = 1 $, so using theorem \ref{ti}, we have $ r_{0}^2 | D_{0}^{n-4} $. Consider $ D_{1} =  \gcd( D_{0} ,  r_{0}  ),$ then
\begin{equation}\label{e11}
D_0 = D_{1}r_{3}\ \textmd{and} \ r_{0} = D_{1}r_{2} \ \textmd{such that} \  \gcd(r_{3} , r_{2}) = 1.
\end{equation}
Thus $ {D_{1}}^{n-4}{r_{3}}^{n-4}r_{1}^{n-2} = v{D_{1}}^2{r_{2}}^2 $
 \begin{equation}\label{e6}
 {D_{1}}^{n-6}{r_{3}}^{n-4}r_{1}^{n-2} = v{r_{2}}^2
 \end{equation}
where $\gcd(r_{2}, r_{1}r_{3}) = 1 $ using theorem\ref{ti}, we get $ {r_{2}}^2 \mid {D_{1}}^{n-6}. $ Continued in this way and let $ D_{i}=\gcd(r_{2i-2},D_{i-1}) ,$ then
\begin{equation}\label{e12}
D_{i-1} = D_{i}r_{2i+1} \ \textmd{and}  \ \ r_{2i-2} = D_{i}r_{2i} \ \textmd{such that} \ \gcd(r_{2i+1}, r_{2i}) = 1
\end{equation}
we obtain $ D_{i}^{n-2i}{r_{2i+1}}^{n-2i}{r_{2i-1}}^{n-2i-6}\cdots {r_{3}}^{n-2}r_{1}^{n-2} = v{D_{i}}^2.{r_{2i}}^2 $
\begin{equation}\label{e13}
{D_{i}}^{n-2i-2}{r_{2i+1}}^{n-2i}{r_{2i-1}}^{n-4i-6}\cdots {r_{3}}^{n-2}r_{1}^{n-2} = v{r_{2i}}^2.
\end{equation}
 Since $\gcd(r_{2i}, r_{1}r_{3} \ldots r_{2i+1}) = 1 $
and using theorem \ref{ti}, we have $ {r_{2i}}^2 \mid {D_{i}}^{n-2i-4} $ where $ i=0,1,2,\ldots, \frac{n-5}{2} $.
Finally we have
        ${r_{n-5}}^2 \mid D_{\frac{n-5}{2}}$
   as $n$ is odd then\\
$ D_{\frac{n-5}{2}} = r_{n-2} {r_{n-5}}^2, $
$ v = r_{n-2}r_{n-4}^{3}r_{n-6}^{5}\cdots r_{3}^{n-4}r_{1}^{n-2} $
$ D_{\frac{n-5}{2}} =r_{n-2}{r_{n-5}}^2 $ and  $ r_{n-7}=r_{n-2}{r_{n-5}}^3 $\\
substituting backward
 we get the required result, where $g=r_{n-5}$ and $h=r_{n-6}.$
\end{proof}

\begin{rk}\label{rk}
\begin{itemize}
\item  When n is an odd integer $  \theta = \prod\limits_{i=0}^{\frac{n-3}{2}} (r_{2i+1})^{i+1}g^{n} $\\
\item  When n is an even integer $  \theta =  \prod\limits_{i=0}^{\frac{n-4}{2}}({r}_{2i+1})^{i+1}h^{\frac{n}{2}} $
\end{itemize}
\end{rk}

\begin{thm}\label{t6}
 Let us consider the $m$-variables Diophantine equation
\ref{e1}.
\begin{description}
           \item[i] If $n$ is odd, then all the positive solution of this equation can be describe by the parametric formulas   \\
                    $x_{j} =\prod\limits_{i=0}^{\frac{n-3}{2}}\Big(k_{2i+1}^{j-1}\Big)^{n-2i-2}\Big(\prod_{t=1}^{j-1}\Big(\gamma^{j-t}_t\Big)\prod_{t=1}^{j}\Big(\gamma^{j-t}_j\Big)\Big)^{n-1}\gamma^{j-t}_j\eta_j^{m-2-j}$\\ $j=1,2,\ldots, m-3$\\
                     $x_{m-2} =(\prod\limits_{t=1}^{m-3}(\prod\limits_{i=0}^{\frac{n-3}{2}} ({(k_{2i+1}^{t-1}l^{m-3}_{2i+1})}^{i+1})({k_{2i+1}^{m-3}})^{n-2i-2})({\gamma_t^{m-2-t}})^{n-1}) s_{2}^n  g^{n}$\\
                      $ x_{m-1} = (\prod\limits_{t=1}^{m-3}(\prod\limits_{i=0}^{\frac{n-3}{2}}({(k_{2i+1}^{t-1}l^{m-3}_{2i+1})}^{i+1}){l^{m-3}_{2i+1}}^{n-2i-2})(\eta_t^{m-2-t})^{n-1}) s_{1}^n  g^{n}$\\ $z=\prod\limits_{\lambda=1}^{m-3}\prod\limits_{i=0}^\frac{n-3}{2}(k_{2i+1}^{\lambda-1}l_{2i+1})(\prod\limits_{i=1}^{m-3}\prod\limits_{\lambda=0}^{m-2-i}\eta_i\gamma_i^{\lambda})s_{1}s_{2}g^{2}\\$
\item[ii] If $n$ is even, then all the positive solution of this equation can be describe by the parametric formulas\\
 $x_{j} =\prod\limits_{i=0}^{\frac{n-4}{2}}\Big(k_{2i+1}^{j-1}\Big)^{n-2i-2}\Big(\prod_{t=1}^{j-1}\Big(\gamma^{j-t}_t\Big)\prod_{t=1}^{j}\Big(\gamma^{j-t}_j\Big)\Big)^{n-1}\gamma^{j-t}_j\eta_j^{m-2-j}$\\\\
  for all $j=1,2,\ldots, m-3$\\
                     $x_{m-2} =(\prod\limits_{t=1}^{m-3}(\prod\limits_{i=0}^{\frac{n-3}{2}} ({(k_{2i+1}^{t-1}l^{m-3}_{2i+1})}^{i+1})({k_{2i+1}^{m-3}})^{n-2i-2})({\gamma_t^{m-2-t}})^{n-1}) s_{2}^n  h^{\frac{n}{2}}$\\
                      $ x_{m-1} = (\prod\limits_{t=1}^{m-3}(\prod\limits_{i=0}^{\frac{n-4}{2}}({(k_{2i+1}^{t-1}l^{m-3}_{2i+1})}^{i+1}){l^{m-3}_{2i+1}}^{n-2i-2})(\eta_t^{m-2-t})^{n-1}) s_{1}^n  h^{\frac{n}{2}}$\\ $z=\prod\limits_{\lambda=1}^{m-3}\prod\limits_{i=0}^\frac{n-3}{2}(k_{2i+1}^{\lambda-1}l^{m-3}_{2i+1})(\prod\limits_{i=1}^{m-3}\prod\limits_{\lambda=0}^{m-2-i}\eta_i^{\lambda}\gamma_i^{\lambda})s_{1}s_{2}g^{2}\\$
        \end{description}
  where $  k^t_{2i+1}, l^{m-3}_{2i+1}, b_i^{t}a_i^{t}, s_{1}, s_{2}, h$ and $ g $ are positive integers such that $1= \gcd(s_{1},s_{2})$,
 $ \gcd(k_{2i-1},k^t_{2i-1}l^{m-3}_{2i-1})=1=\gcd(\gamma_i^{t}\eta_i,\gamma_i)=1 \  \forall \ i=1, 2, \ldots, \frac{n-6}{2}, \  t=1, 2, \ldots, m-3.$
 \end{thm}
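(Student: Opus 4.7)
The plan is to chain together the reductions already established: Theorem \ref{t4} peels off the pair $(x_{m-1},x_{m-2})$ and produces an auxiliary equation of the form \ref{e3}, which Theorem \ref{t5} solves explicitly; one then iterates this reduction on the residual equation \ref{e2}, which is itself an $(m-1)$-fold product equation of the same type.

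I would proceed by induction on $m$. The base case $m=3$ reduces to $x_1 x_2 = z^n$ and is handled by Theorem \ref{t3} (with $k=1$) combined with Theorem \ref{t5}, producing exactly the parameters $s_1, s_2, g$ (or $h$) and $r_{2i+1}$ appearing in the statement when the $j$-indexed products are empty. For the inductive step, I apply Theorem \ref{t4} to the equation in $m$ variables: the pair $(x_{m-1},x_{m-2})$ contributes the factor $\theta = wd$, and by Theorem \ref{t5} together with Remark \ref{rk} the quantities $w, d, v, \theta$ become explicit monomials in the innermost parameters (renamed $k_{2i+1}^{m-3}$ and $l_{2i+1}^{m-3}$ after the coprime split of $X_{m-1}$ against $X_{m-2}$) and in $g$ or $h$ according to parity. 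The residual equation \ref{e2} is $X_{m-1} X_{m-2} x_1 \cdots x_{m-3} = v Z_0^n$ with $\gcd(X_{m-1},X_{m-2})=1$; applying Theorem \ref{t2} once absorbs $X_{m-1}, X_{m-2}$ into factors $s_1^n, s_2^n$ times new coprime parameters $(\gamma_{m-2}, \eta_{m-2})$, after which the induction hypothesis applies to the equation in the remaining variables $x_1, \ldots, x_{m-3}$ and a new $z$-term.

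Tracking exponents is where the real work lies: at stage $j$, reading from the outside in, one picks up the innermost-parameter exponents $(n-2i-2)$ on the "large" side and $(i+1)$ on the $\theta$-side from Theorem \ref{t5}, together with the exponents $(n-1)$ and $(m-2-j)$ from the successive coprime splits via Theorem \ref{t2}. Collecting these contributions over stages $\lambda=1,\dots,m-3$ assembles the nested products displayed in the statement. The parity of $n$ enters only through the upper index $\frac{n-3}{2}$ versus $\frac{n-4}{2}$ and through the residual factor $g^n$ versus $h^{n/2}$ inherited from the two branches of Theorem \ref{t5}, so the two cases are handled by a single argument with parallel notation.

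The main obstacle is bookkeeping rather than any conceptual leap. Two points need genuine care. \emph{First}, verifying at each stage that the coprimality hypotheses required to reinvoke Theorems \ref{t2} and \ref{t5} are preserved, i.e.\ that newly introduced parameters are pairwise coprime in the precise sense demanded. \emph{Second}, checking that the parametrization is not merely sound but exhaustive: every gcd extraction and every application of Theorems \ref{t2}, \ref{t3}, \ref{t5} is a bijection between coprime decompositions and parameter tuples, so the composition is a bijection onto the positive integer solution set; this surjectivity must be audited explicitly rather than left implicit, and this is the step most prone to hiding a missed family of solutions.
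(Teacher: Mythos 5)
Your overall toolkit is the right one---Theorem \ref{t4} to peel off the pair $(x_{m-1},x_{m-2})$, Theorem \ref{t5} for the auxiliary equation $w^{n-2}=vd^2$, and repeated use of Theorem \ref{t2} to generate the nested parameters---and your closing remarks about preserving coprimality and auditing surjectivity identify exactly the weak points of any argument of this shape. But the order of operations in your inductive step hides a genuine gap. You propose to apply Theorem \ref{t2} \emph{first} to the residual equation $X_{m-1}X_{m-2}x_1\cdots x_{m-3}=vZ_0^n$, absorbing $X_{m-1}$ and $X_{m-2}$ into $s_1^n$ and $s_2^n$ times coprime cofactors, and only then to invoke the induction hypothesis on $x_1,\ldots,x_{m-3}$. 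Theorem \ref{t2}, however, requires the left-hand side to be written as a product of \emph{two coprime factors}, and at this stage the only coprimality available is $\gcd(X_{m-1},X_{m-2})=1$; nothing prevents $X_{m-1}$ or $X_{m-2}$ from sharing divisors with $x_1,\ldots,x_{m-3}$, so neither $X_{m-1}$ nor $X_{m-1}X_{m-2}$ can be split off the product against the remaining factors. This is precisely why the paper runs the reduction in the opposite order: it strips $x_1,\ldots,x_{m-3}$ one at a time via the chain $P_i=\gcd(x_i,Z_{i-1})$, $x_i=P_iX_i$, $Z_{i-1}=P_iZ_i$, each step manufacturing the coprimality $\gcd(X_i,Z_i)=1$ needed to apply Theorems \ref{ti} and \ref{t2}, and only at the very end, when the equation has collapsed to $X_{m-1}X_{m-2}=Z_{m-3}^na_{m-3}$ with its two factors coprime, does it apply Theorem \ref{t2} to produce $s_1^n$ and $s_2^n$.

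A second, smaller issue: your induction hypothesis is stated for equation \ref{e1}, but the residual equation \ref{e2} is \emph{not} of that form---it carries the coefficient $v$, i.e.\ it is of the shape $y_1\cdots y_{m-2}=kz^n$. To run an induction on $m$ you must strengthen the hypothesis to this coefficient-bearing equation (the $m=3$ instance of which is Theorem \ref{t3}, which you do cite for the base case, so the fix is natural but must be made explicit). The paper sidesteps this by not inducting at all: it performs the whole sequential extraction inside one proof, carrying the coefficients $a_0,a_1,\ldots,a_{m-3}$ along the chain. With the order of gcd extractions corrected and the induction hypothesis generalized, your argument would align with the paper's; as written, the step ``applying Theorem \ref{t2} once absorbs $X_{m-1},X_{m-2}$'' is not justified.
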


   \begin{proof}
 Since $ n \geq 2 $, then by theorem \ref{t4},
the given equation is equivalent to the diophantine system, $v=a_0,$
$ X_{m-1}X_{m-2}x_{1}x_{2}x_{3}....x_{m-3}=Z_0^na_0 $ and $
         w^{n-2}=vd^2.$ Thus by theorem \ref{t5} and remark \ref{rk}
let $ P_{1} = \gcd(x_{1},Z_0),$
then $ x_{1} = P_{1}.X_{1} $ and $ Z_0=P_{1}.Z_{1},$ so $
 X_{m-1}X_{m-2}X_{1}x_{2}\cdots x_{m-3} = Z_{1}^n P_{1}^{n-1}a_0. \\ $ Again let $\gcd(Z_{1} , X_{1})=1,$
then, we have
$ X_{1}\mid a_0P_{1}^{n-1}\Rightarrow
  X_{1}.a_{1} =a_0P_{1}^{n-1}$ and by theorem \ref{t2}
$ X_{1} = \alpha_1 \gamma_1^{n-1} $ and
$ a_{1} = \beta_1 \eta_1^{n-1}$
 such that $\alpha_1\beta_1=a_0$ and $ \gamma_1\eta_1=P_1$ where  $\gcd(\alpha_1,\beta_1)=1=\gcd(\gamma_1,\eta_1).$ \\We have $X_{m-1}X_{m-2}x_{2}x_{3}....x_{m-3}=Z_0^na_1,$
   continued in this way and
let $ P_{i} = \gcd(x_{i},Z_{i-1}).$
then $ x_{i} = P_{i}.X_{i} $ and $ Z_{i-1}=P_{i}.Z_{i}.$ Thus
 $X_{m-1}X_{m-2}X_{i}\prod\limits_{j=i+1}^{m-3} x_{j} = Z_{i}^n P_{i}^{n-1}a_{i-1}. $ Since $\gcd(Z_{i} , X_{i})=1$
then,
$ X_{i}\mid a_{i-1}P_{i}^{n-1} \Rightarrow
  X_{i}.a_{i} =a_{i-1}P_{i}^{n-1}$
 so by theorem \ref{t2}, we get
$ X_{i} = \alpha_i \gamma_i^{n-1} $ and $ a_{i} = \beta_i \eta_i^{n-1}$
 such that $\alpha_i\beta_i=a_i$ and $\gamma_i\eta_i=P_i$ where $\gcd(\alpha_i,\beta_i)=1=\gcd(\gamma_i,\eta_i).$  We have $X_{m-1}X_{m-2}\prod\limits_{j=i+1}^{m-3} x_{j}=Z_{i-1}^na_i \ \ \ \forall \ i=1,2,3,\ldots,m-3$ and at last we get
 $X_{m-1}X_{m-2}= Z_{m-3}^n a_{m-3}$
then by theorem \ref{t2}
$ X_{m-2} = \alpha_{m-2} s_1^n $ and
$ X_{m-1} = \beta_{m-2} s_2^n$
 such that $\alpha_{m-2}\beta_{m-2}=a_{m-3} $ and $s_1s_2=Z_{m-4}$ with $\gcd(\alpha_{m-2},\beta_{m-2})=1=\gcd(s_1,s_1).$ Now by using theorem \ref{t2} and technique in proof of theorem \ref{t5}, we have for all $ j=1,2,3,\ldots,m-2$\\ $\alpha_{j}=(\prod\limits_{i=1}^{m-3}(k^{j-1}_{2i+1})^{n-2i-2})(\prod\limits_{t=1}^{j}\gamma^{j-t}_t)^{n-1},\ \ \beta_{j}=(\prod\limits_{i=1}^{m-3}(l^{j-1}_{2i+1})^{n-2i-2})(\prod\limits_{t=1}^{j}\eta^{j-t}_t)^{n-1}.\\$
 Put in equation\ref{e3} and \ref{e5} we get required result.
 \end{proof}
\begin{rk}
The solution is described in terms of $\frac{(m-1)(m+n-2)}{2} $ or $\frac{(m-1)(m+n-1)}{2}$ positive parameters depending on  $n$ even or odd.
\end{rk}
Let  $p_{1}, p_{2}, \ldots, p_{s} $ and $r$ be positive integers and suppose that $ t=p_{1}+p_{2}+\cdots + p_{s} - r$
\begin{thm}\label{t7}
   Consider the $t$-variables diophantine system of $s$ equations,

$ x_{11}x_{12}x_{13}....x_{1p_{1}-1}=z_{1}^{k_{1}}\\
 x_{21}x_{22}x_{23}....x_{2p_{2}-1}=z_{2}^{k_{2}}\\
 \vdots\\
 x_{i1}x_{i2}x_{i3}....x_{ip_{i}-1}=z_{i}^{k_{i}}\\
 \vdots \\
 x_{s1}x_{s2}x_{s3}....x_{sp_{s}-1}=z_{s}^{k_{s}}\ $ where $ 2 \leq k_{j}' $ s are  positive number   where $ s$  no.of equations and $r$ is no of repeated variables in this system.
Then all the positive solutions of this system of equations can be describe by using the following algorithm.
\end{thm}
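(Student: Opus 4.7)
The plan is to build the algorithm inductively, equation by equation, using Theorem \ref{t6} as the base case and the factorization results Theorems \ref{ti}--\ref{t2} to splice solutions of different equations at their shared variables. First I would dispose of the trivial case $r=0$: when no variable is repeated, the $s$ equations are independent, and Theorem \ref{t6} applied separately to each produces a parametric description of all positive solutions, the full parameter set being the disjoint union of the $s$ parameter families.

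For the genuine case $r\geq 1$, I would process the equations in some fixed order $i=1,2,\ldots,s$. Applying Theorem \ref{t6} to the first equation produces a parametric formula for each of the variables $x_{1,1},\ldots,x_{1,p_1-1},z_1$. Now pass to the second equation. Some of its variables may coincide with variables already parameterized in stage $1$; call these the \emph{constrained} variables, and the rest \emph{free}. For each constrained variable $x_{2,j}=x_{1,j'}$, the stage-$1$ formula already prescribes its value. Substituting these prescribed values into the second equation leaves a reduced diophantine equation of the form \ref{e1} in the remaining free variables of the second row, multiplied by a known integer factor on the right, which is exactly the situation handled by Theorem \ref{t3} followed by iterated application of Theorem \ref{t2} — precisely the mechanism used in the proof of Theorem \ref{t6}. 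Solving this reduced equation adds a new batch of parameters, and one continues to stage $3$, and so on.

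At stage $i$, I would let $A_i$ denote the known integer factor built from the previously determined parameters (it absorbs the contribution of every variable in equation $i$ that has already been fixed by earlier stages), so that equation $i$ reduces to
\begin{equation*}
\prod_{j\in F_i} x_{i,j}=A_i\, z_i^{k_i},
\end{equation*}
where $F_i$ indexes the variables of equation $i$ that are still free. Because $A_i$ is a fixed positive integer at this point, Theorem \ref{t6} (or more precisely, the inductive argument used inside its proof, which only requires a constant $k$ and never uses $k=1$) applies and yields a parametric description of all positive solutions of this reduced equation, introducing a new independent parameter family. Concatenating these families across all $s$ stages yields the advertised algorithmic description.

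The main obstacle I expect is bookkeeping: choosing a canonical ordering of equations and a canonical labeling of the repeated positions so that the reduction at stage $i$ really is of the type solved by Theorem \ref{t6}, and verifying that every positive solution of the original system is produced by some choice of parameters (no solution is lost when substituting a prescribed value of a shared variable). This is where one must check that coprimality conditions imposed at stage $i$ are consistent with those inherited from earlier stages; the cleanest way is probably to show, using Theorem \ref{ti} at each merge, that the coprimality hypotheses decouple across stages, so the total parameter family is free. Termination is automatic since each stage strictly reduces the number of unprocessed equations, so after $s$ stages the algorithm halts with a complete parametrization.
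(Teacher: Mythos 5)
Your plan is a sequential-substitution variant of what the paper does, but the two are organized differently: the paper first writes down an independent Theorem~\ref{t6} parametrization of \emph{every} equation and then reconciles the several parametric expressions obtained for each repeated variable by successive $\gcd$-splittings (this reconciliation is what the two worked examples actually carry out), whereas you substitute the already-fixed value of a shared variable into equation $i$ and claim the residue is again an instance of Theorem~\ref{t6}. That step is where the gap lies. First, the reduced equation comes out with the known factor on the wrong side: if the shared variables sit among the $x_{i,j}$, substitution gives $A_i\prod_{j\in F_i}x_{i,j}=z_i^{k_i}$, not $\prod_{j\in F_i}x_{i,j}=A_i z_i^{k_i}$, and the paper's tools (Theorems~\ref{t2} and~\ref{t3}) handle a known constant sitting next to $z^n$; with $A_i$ on the left one must instead analyze the divisibility $A_i\mid z_i^{k_i}$, which is a different computation.

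Second, and more seriously, $A_i$ is not a free constant to which a ``fixed $k$'' theorem can be applied: it is a specific product of parameters already committed at earlier stages, and describing \emph{all} positive solutions of the reduced equation forces those earlier parameters to be split further by repeated $\gcd$ extractions, and sometimes forces several of them to equal $1$. The paper's first example shows exactly this: equating the two expressions for $x_3$ gives $r_2^2 d=k_1\gamma_1^3\gamma_1'\eta_1'$, and the ensuing analysis ends with $\beta_2=\alpha_1=\alpha_3=K_1=1$, relations that tie together parameters originating in \emph{both} equations. So your hoped-for conclusion that the coprimality hypotheses ``decouple across stages, so the total parameter family is free'' is precisely what fails; the merge at each repeated variable is the entire content of the theorem, and it cannot be discharged by concatenating independent parameter families. (To be transparent: the paper's own proof is itself only a five-line algorithm sketch, with the real work deferred to the examples, so what you are missing is exactly the part the paper also leaves informal.)
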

 \begin{proof}
\begin{itemize}
\item  Write solution of each equation by using theorem \ref{t6}
\item Select one variable from $r$-repeated variables and find the unique $d$ in solution by using technique theorem \ref{t6}
\item Replace those values of parameters appear in selected repeated, in other variable.
\item Do the same activity with other repeated variable.
\item If all repeated variables have unique solution then substituting  the values of parameter that exist in that variables in other variable.
\end{itemize}
        \end{proof}

To explain above theorem, here is two illustration below
\begin{exa}
   Consider the 6-variables
    Diophantine system,
$ x_{1}x_{2}x_{3} = z_{1}^3$ and $
  x_{3}x_{4} = z_{2}^2.$ Then all the positive solutions of this system equations can be describe by $ 11 $ parametric formulas \\
 $ x_{1} =a{k'_1}^2l'_1 {\gamma'_1}^2  R_{1}^3  g^3, \ \
x_{2} = ak'_{1}{l'_{1}}^2{\eta'_1}^2   R_{2}^3  g^3,\ \
x_{3} = a\gamma'_1\eta'_1b_1^3f^6,\ \
 x_{4} = a\gamma'_1 \eta'_1b_1^3r_1^2, \\
z _{1}= al'_{1}k'_{1}\gamma'_1 \eta'_1b_1f^2R_{1}R_{2}g^2,\ \
z _{2}= a\gamma'_1\eta'_1b_1^3f^3r_{1}.$\\
\textbf{\textsc{Solution:-}}\\
Step 1: We apply theorem \ref{t6} to get the following solution for equation $ 1 $ of the system\\
$ \big \{ x_{1} =k_1{k'_1}^2l'_1 {\gamma'_1}^2  R_{1}^3  g^3, \ \
x_{2} = k_1k'_{1}{l'_{1}}^2{\eta'_1}^2   R_{2}^3  g^3,\ \
x_{3} = k_1\gamma_1^3\gamma'_1\eta'_1,\ \
z _{1}= l'_{1}k'_{1}k_{1}\gamma_1\gamma'_1 \eta'_1R_{1}R_{2}g^2\big \} $ \\
and for equation $ 2 $ of the system\\
$ \big \{ x_{4} = dr_1^2,\ \
x_{3} = r_{2}^2 d,\ \
z _{2}= dr_{1}r_{2}\big \} $
Step 2: Since $x_3$ is the repeated variable in the equations of the system, so
\begin{equation}\label{egi}
  r_{2}^2 d=x_{3} = k_1\gamma_1^3\gamma'_1\eta'_1
\end{equation}
Now we will find that unique $d$ for $x_{3}$, and suppose $ a=\gcd(d,k_{1})$
 such that
\begin{equation*}\label{egj}
\Big\{d = aD_1; k_1=aK_1,gcd(D_1,K_1) = 1 \Big\}
\end{equation*}
put in equation \ref{egi}
\begin{equation}\label{egh}
r_{2}^2D_1 =\gamma_1^3\gamma'_1\eta'_1K_1 \ \Rightarrow  D_1\mid \gamma_1^3\gamma'_1\eta'_1
\end{equation}
then there exist a positive integer $b$ such that $D_1b  = \gamma_1^3\gamma'_1\eta'_1$
by using theorem \ref{t6} $D_1= \beta_1c_1^3,\ \ \ b= \beta_2c_2^3,\ \ \
 \gamma_1=c_1c_2,\ \ \gamma'_1\eta'_1=\beta_1\beta_2\ \ \textmd{such that}\ \gcd(c_2,c_1)=1=\gcd(\beta_1,\beta_1)$
 now equation \ref{egh} become
 \begin{equation}\label{egk}
r_{2}^2=\beta_2c_2^3K_1
\end{equation}
from equation \ref{egk} $ r_2^2\mid c_2^3$
then there exist a positive integer $e$ such that $r_2^2e  = c_2^3$ then by using theorem \ref{t5}
\begin{eqnarray*}
c_2 = \alpha_1\alpha_3f^2,  \ \
r_2 = \alpha_3f^3, \ \
e = \alpha_1^3\alpha_3
\end{eqnarray*}
 substituting the values in equation \ref{egk}, we get
$1= \beta_2\alpha_1^3\alpha_3K_1 \Longrightarrow \beta_2=\alpha_1=1=\alpha_3=K_1 $
replacing the values we get the required result.
\end{exa}
\begin{exa}
   Consider the 6-variable diophantine system,
$ x_{1}x_{2}x_{3} = z_{1}^2$ and $
  x_{3}x_{4} = z_{2}^2 .$ Then all the positive solutions of this system of equations can be describe by $ 11$ parametric formulas\\
$x_{1} = s_{1}^2  adr_1 h,\ \
x_{2} = s_{2}^2  bdr_2 h,\ \
x_{3} = ab(c_1r_1r_2dc)^2,\ \
x_{4} = ab(cS_{1})^2,\\
z _{1}= abhs_{1}s_{2}cc_1r_1r_2d^2,\ \
z _{2}= abS_{1}r_1r_2d c_1c^2.$ \\
\textbf{\textsc{Solution:-}}\\
Step 1: We apply theorem \ref{t6} to get the following solution for equation $ 1 $ of the system\\
$ \big \{ x_{1} = s_{1}^2  k_{1} h,\ \
x_{2} = s_{2}^2  l_{1} h,\ \
x_{3} = \gamma^2 k_{1}l_{1},\ \
z _{1}= s_{1}s_{2}\gamma k_{1} l_{1}h\big \} $ \\
and for equation $ 2 $ of the system we get\\
$ \big \{ x_{4} = g S_{1}^2,\ \
x_{3} = S_{2}^2 g,\ \
z _{2}= gS_{2}S_{1} \big \} $\\
Step 2: Since $x_3$ is the repeated variable in the equations of the system, so
\begin{equation}\label{eg1}
 x_{3} = S_{2}^2 g = \gamma^2 k_{1}l_{1}
\end{equation}
Now we will find that unique $d$ for $x_{3}$, and suppose  $ a=\gcd(g,k_{1})$
 such that
\begin{equation}\label{eg2}
\Big\{g = a\alpha; k_1=a\beta,\gcd(\alpha,\beta) = 1 \Big\}
\end{equation}
then equation \ref{eg1} becomes $S_{2}^2\alpha =\gamma^2 \beta l_{1}\Rightarrow \alpha\mid\gamma^2 l_{1}$
so there exist a positive integer $e$ such that $\alpha e  = \gamma^2 l_{1}$.
Now using theorem \ref{t2}, we get $\alpha_1= bc^2,\ \ \ e= b_1c_1^2,\ \ \
 \gamma=cc_1,\ \ \l_1=bb_1\ \textmd{such that}\ \gcd(b,b_1)=1=\gcd(c,c_1).$
 Thus equation \ref{eg1} becomes
 $S_{2}^2=c_1^2\beta b_1$
 clearly $c_1^2\mid S_2^2$ and by theorem \ref{tj}, we have $c_1\mid S_2\Rightarrow _2=c_1f$. Thus we get
 $f^2=\beta b_1$ and by using theorem \ref{t6} we have $f=r_1r_2d ,\ \ \beta=dr_1 ,\ \ b_1=dr_2 $
substituting the values we get the required result.
  \end{exa}


\begin{thebibliography}{00}
\bibitem {W} W.sierpinski, Elementary Theory of Numbers, Warsaw,Poland, $ 1964$
\bibitem {K} Kennerth H. Rosen, Elementary Theory of Numbers and its applications, $ 5^{th} $ edition
\bibitem {A} A.Schinzel, On the equation, $ x_{1}x_{2}\cdots x_{n} = t^{k}, $ Bill. Acad. Polon. Sci. Cl.lll, 3(1955), p.p. 17-19.
\bibitem {M} M.Ward, A type of muplticative diophanitine systems, Amer.J.Math., 55(1933), pp. 67-76.
\bibitem {Kz}Konsatantine Zelator, The diophanitine equation $ xy=z^n $ for $ n=2,3,4,5,6 $; The Diophanitine Equation $ xyz=w^2,$ and the Diophanitine systems $ \big\{ xy=v^2 \ \ yz=w^2\big\}$
\end{thebibliography}
\end{document}